\newtheorem{theorem}{Theorem}[]
\theoremstyle{definition}
\newtheorem{corollary}{Corollary}[]
\newtheorem{remark}[]{Remark}[]
\begin{document}
\title[]{A note on the centres of a closed chain of circles}
\author[\'A. G.Horv\'ath]{\'Akos G.Horv\'ath}
\address {\'A. G.Horv\'ath \\ Department of Geometry \\ Mathematical Institute \\
Budapest University of Technology and Economics\\
H-1521 Budapest\\
Hungary}
\email{ghorvath@math.bme.hu}


\date{May, 2018}

\maketitle

\section{Introduction}

An interesting recent elementary statement on circles is Dao's Theorem on six circles (see \cite{cohl}, \cite{dergiades}, \cite{dung} and \cite{ngo}). This theorem states that if we have a cyclic hexagon and consider six triangles defined by the lines of its three consecutive sides, then the circumcentres of these triangles are the vertices of a Brianchon hexagon, that is a hexagon whose main diagonals are concurrent. Here we note that Brianchon's theorem states that the main diagonals of a hexagon circumscribed to a conic are concurrent. Hence the hexagons circumscribed to a conic are always Brianchon hexagons, see e.g. \cite{stachelbook}). Two consecutive circles of the chain of six circles intersect each other in at most two points; one of them is a vertex of the original hexagon and the other one we call the "second point of intersection" of the two circles. We note that the second points of intersection are not concyclic in a general situation. To see a simple example consider a degenerated Dao's configuration (see Fig. \ref{fig:daocond}). One of the sides of the hexagon has zero length and the corresponding triangle degenerates to a point $I$. We also assume that the circles $k$ and $l$ touch at a point $II$ and similarly the circles $n$ and $p$ touch at a point $III$. The set of second points of intersection contains the points $I$, $II$ and $III$ which determine the original circumscribed circle of the pentagon. Consequently, the second points of intersection cannot be concyclic, as we stated. This shows that the following problem is independent of that of Dao.

\begin{figure}[h]
\includegraphics[scale=0.6]{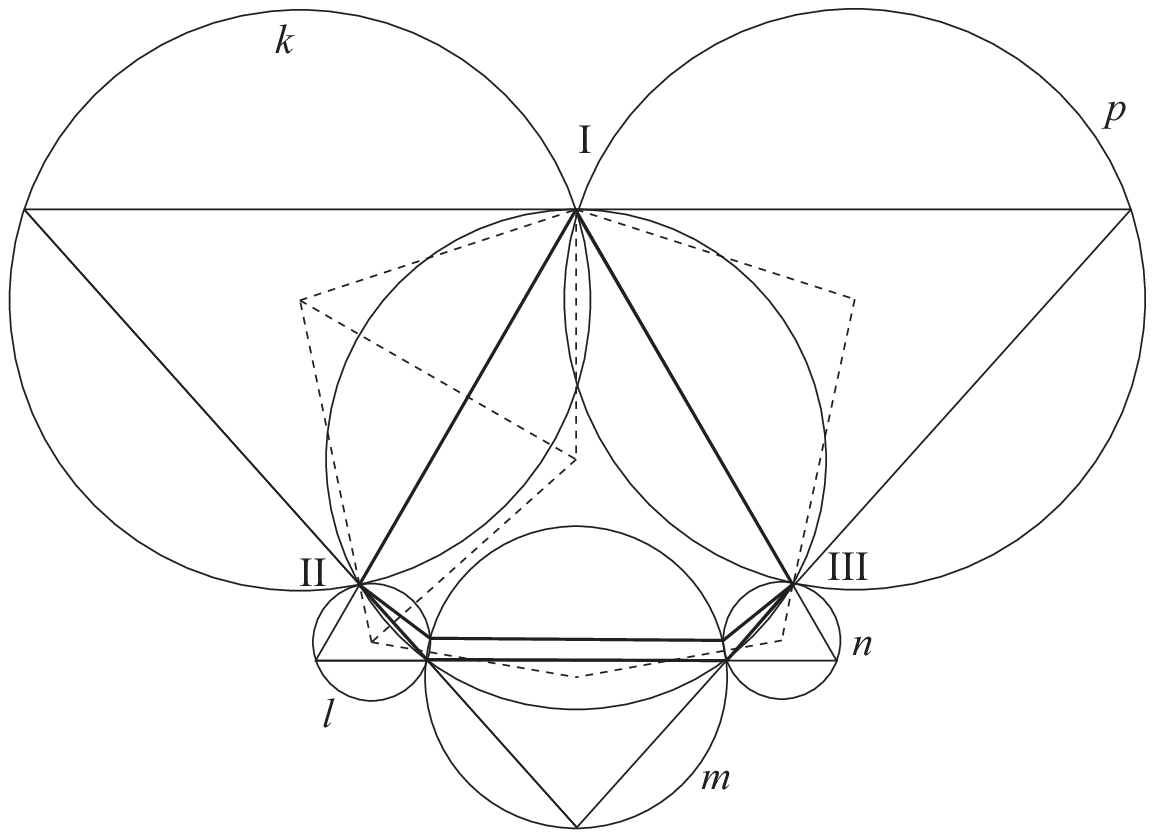}
\caption{The set of "second points" cannot be concyclic in this Dao's configuration.}\label{fig:daocond}
\end{figure}

Miquel's Six-Circles Theorem (see \cite{miquel}) can be formulated in the following way: If we have two cyclic quadrangle $P_1P_2P_3P_4$ and $Q_1Q_2Q_3Q_4$  for which the quadruples $P_1Q_1Q_2P_2$, $P_2Q_2Q_3P_3$, $P_3Q_3Q_4P_4$ are cyclic then the last quadruple of this type $P_4Q_4Q_1P_1$ is also cyclic. The circumcircles of the last four quadruples form a closing chain of intersecting circles with the property that the points of intersection belong to two other circles transversal to each circle of the chain. By induction, on can easily prove the following extension of Miquel's theorem:

\begin{theorem}[\cite{gevay1}]
Let $\alpha$ and $\beta$ be two circles. Let $n>2$ be an even number, and take the points $P_1,\ldots, P_n$ on $\alpha$ and $Q_1,\ldots,Q_n$ on $\beta$, such that each quadruple $P_1Q_1Q_2P_2$, $\ldots, $ $P_{n-1}Q_{n-1}Q_nP_n$ is cyclic. Then the quadruple $P_nQ_nQ_1P_1$ is also cyclic.
\end{theorem}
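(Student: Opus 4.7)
The plan is to proceed by induction on the even integer $n \ge 4$, with base case $n=4$, which is exactly Miquel's Six-Circles Theorem as recalled in the introduction.

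For the inductive step, I would assume the statement holds for some even $n\ge 4$ and prove it for $n+2$. So suppose points $P_1,\ldots,P_{n+2}$ on $\alpha$ and $Q_1,\ldots,Q_{n+2}$ on $\beta$ are given, and that the $n+1$ quadruples $P_iQ_iQ_{i+1}P_{i+1}$ for $i=1,\ldots,n+1$ are cyclic. The goal is to show that $P_{n+2}Q_{n+2}Q_1P_1$ is cyclic.

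The key move is to collapse the first two links of the chain into one. Consider the subconfiguration consisting of $\alpha$ (which carries the quadrangle $P_1P_2P_3P_4$), $\beta$ (which carries $Q_1Q_2Q_3Q_4$), and the three cyclic quadruples through $P_1Q_1Q_2P_2$, $P_2Q_2Q_3P_3$, $P_3Q_3Q_4P_4$. This is precisely the setup of Miquel's Six-Circles Theorem, whose conclusion yields that $P_4Q_4Q_1P_1$ is cyclic.

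Now relabel by setting $P'_1=P_1$, $P'_2=P_4$, $P'_3=P_5,\ldots,P'_n=P_{n+2}$, and analogously for the $Q$'s. We obtain $n$ points on each of $\alpha$ and $\beta$, and the $n-1$ consecutive quadruples $P'_iQ'_iQ'_{i+1}P'_{i+1}$ ($i=1,\ldots,n-1$) are cyclic: the first is the one just produced by Miquel, and the remaining $n-2$ are inherited from the original hypothesis. The inductive hypothesis, applied to this reduced chain of length $n$, then gives that $P'_nQ'_nQ'_1P'_1=P_{n+2}Q_{n+2}Q_1P_1$ is cyclic, completing the induction.

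There is no real obstacle in this argument; the only point to verify carefully is that the reduction step genuinely produces a Miquel configuration, i.e.\ that in applying Miquel to the first three links, the two ``carrier'' circles are $\alpha$ and $\beta$. This is automatic because $P_1,P_2,P_3,P_4\in\alpha$ and $Q_1,Q_2,Q_3,Q_4\in\beta$ by construction. This is also why the hypothesis ``$n$ even'' is preserved by the inductive step: each application of Miquel removes exactly two points from each of $\alpha$ and $\beta$, so parity is maintained and the induction terminates at the base case $n=4$.
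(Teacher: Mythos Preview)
Your argument is correct and matches the paper's approach: the paper itself does not give a detailed proof of this statement but simply remarks that it follows ``by induction'' from Miquel's Six-Circles Theorem, which is exactly the scheme you have carried out (base case $n=4$ being Miquel, inductive step collapsing three consecutive quadruples into one via Miquel to pass from $n+2$ to $n$). The phrase ``collapse the first two links'' is slightly imprecise---you are actually replacing the first three cyclic quadruples by a single one, thereby deleting two points from each carrier circle---but the mathematics that follows is clear and correct.
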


In the case of $n=6$ the obtained configuration of circles is very similar to the configuration in Dao's theorem, so it is not to surprising that L. Szilassi observed that the centers of the circles form Brianchon hexagon, but he has given no proof. G. G\'evay gave a proof of this statement in \cite{gevay2} using projective geometry of the three-space. On the other hand this problem was published earlier in Crux Mathematicorum by Dao \cite{dao} without solution. It is interesting that in a later volume of Crux Mathematicorum we can find a correction for another problem (see \cite{bataille}) containing the key statement to give a simple solution for the above one.

Our short paper contains a simpler and shorter proof of that the hexagon in question is a Brianchon one. This proof also leads to a generalization of the statement from the case $n=6$ to any even value of $n$.

\section{A theorem on the chain of intersecting circles}

We prove the following theorem:

\begin{theorem}\label{thm:chainofcirc}
Let $c(K)$ and $c(L)$ be two circles with respective centers $K$ and $L$. Let $n>2$ be an even number, and take the points $P_1,\ldots P_n$ on $c(K)$ and $Q_1,\ldots,Q_n$ on $c(L)$, such that each quadruple $P_1Q_1Q_2P_2$, $\ldots $, $P_nQ_nQ_1P_1$ is cyclic. Denote by $O_i$ the center of the circle $c(O_i)$ circumscribed the quadrangle $P_iQ_iQ_{i+1}P_{i+1}$. Then for each value of $i$ the line $O_iO_{i+1}$ is tangent to a fixed conic with foci $K$ and $L$.
\end{theorem}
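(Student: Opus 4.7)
The plan is to reduce the statement to the classical product-of-distances characterisation of conics with given foci, and then to verify the corresponding invariant step by step along the chain.

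First, note that $O_i$ and $O_{i+1}$ are both equidistant from $P_{i+1}$ and from $Q_{i+1}$, so the line $O_iO_{i+1}$ is precisely the perpendicular bisector $\ell_{i+1}$ of the segment $P_{i+1}Q_{i+1}$. Hence it is enough to show that the lines $\ell_j$ are all tangent to one fixed conic with foci $K$ and $L$. I use the following characterisation: a line $\ell$ is tangent to such a conic with minor semi-axis $b$ if and only if $d(K,\ell)\cdot d(L,\ell)=b^{2}$. For a perpendicular bisector one has $d(X,\ell_j)=\bigl\lvert|XP_j|^{2}-|XQ_j|^{2}\bigr\rvert/(2|P_jQ_j|)$ at every point $X$; applying this at $X=K$ and $X=L$ yields
\[
d(K,\ell_j)\,d(L,\ell_j)=\frac{\bigl|\mathrm{pow}_K(Q_j)\cdot\mathrm{pow}_L(P_j)\bigr|}{4\,|P_jQ_j|^{2}}.
\]
The theorem is therefore equivalent to showing that the ratio
\(\kappa(P_j,Q_j):=\mathrm{pow}_K(Q_j)\cdot\mathrm{pow}_L(P_j)/|P_jQ_j|^{2}\)
does not depend on $j$.

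To prove this, it suffices to verify the invariance across one step: whenever $P,P'\in c(K)$, $Q,Q'\in c(L)$ and the four points lie on a common circle $c(O)$, one has $\kappa(P,Q)=\kappa(P',Q')$. I place the origin at $O$, write $K=(k,0)$ and $L=m(\cos\phi_L,\sin\phi_L)$, and parametrise $P,Q,P',Q'$ on the circle $c(O)$ of radius $\rho$ by angular coordinates $p,q,p',q'$. Because $\{P,P'\}=c(K)\cap c(O)$, the two angles $p,p'$ are symmetric across the line $OK$, so $p'=-p$; analogously $q'=2\phi_L-q$. Using $\mathrm{pow}_K(X)=|X|^{2}-2K\cdot X+|K|^{2}-r_K^{2}$ on $|X|=\rho$, together with $|PQ|^{2}=4\rho^{2}\sin^{2}\tfrac{p-q}{2}$, a short computation yields
\[
\kappa(P,Q)=-4km\sin\tfrac{p+q}{2}\sin\!\Bigl(\tfrac{p+q}{2}-\phi_L\Bigr)=2km\bigl(\cos(p+q-\phi_L)-\cos\phi_L\bigr).
\]
Under the substitution $(p,q)\mapsto(-p,\,2\phi_L-q)$, the argument $p+q-\phi_L$ is merely negated and cosine is even, so $\kappa(P',Q')=\kappa(P,Q)$. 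Iterating around the closed chain gives a common value of $\kappa$; hence all the $\ell_j$ are tangent to the conic with foci $K,L$ and minor semi-axis $b$ satisfying $4b^{2}=\kappa$.

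The main obstacle is the one-step invariance claim. The unlocking idea is to look at the four concyclic points on their common circle $c(O_i)$: the pairs $\{P_i,P_{i+1}\}$ and $\{Q_i,Q_{i+1}\}$ are reflections of one another across the lines $O_iK$ and $O_iL$ respectively, and, once angular coordinates on $c(O_i)$ are introduced, the invariance of $\kappa$ reduces to the evenness of the cosine function.
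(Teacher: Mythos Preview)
Your argument is correct, and it reaches the goal by a genuinely different route from the paper.

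Both proofs start from the same observation, namely that $O_iO_{i+1}$ is the perpendicular bisector of $P_{i+1}Q_{i+1}$. From there the paper works purely synthetically: it uses the \emph{reflection} characterisation of tangents (the mirror image of one focus in a tangent lies on a fixed circle about the other focus) and composes the three reflections in the perpendicular bisectors of $Q_iP_i$, $P_iP_{i+1}$, $P_{i+1}Q_{i+1}$, all of which pass through $O_i$. Since this composite is itself a reflection fixing $L$ and sending the image of $K$ in $O_{i-1}O_i$ to its image in $O_iO_{i+1}$, those two images are equidistant from $L$, and the tangent conic is the same for consecutive sides. Your proof instead uses the \emph{product-of-distances} characterisation and converts it, via the power of a point, into the explicit invariant $\kappa(P_j,Q_j)$; the one-step constancy is then a short trigonometric identity on $c(O_i)$. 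The paper's version is coordinate-free and makes the role of $O_i$ as a common fixed point of isometries transparent; your version is more computational but yields an explicit formula for the size of the conic in terms of the data, which the synthetic proof does not.

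One small correction: your closing equation should read $4b^{2}=|\kappa|$ rather than $4b^{2}=\kappa$. The product of the \emph{signed} distances from $K$ and $L$ to $\ell_j$ equals $-\kappa/4$, so the sign of $\kappa$ decides whether the conic is an ellipse ($K,L$ on the same side of every $\ell_j$) or a hyperbola (opposite sides). Because you prove that the \emph{signed} quantity $\kappa$ is constant along the chain, this sign is automatically consistent, and the minor imprecision does not affect the validity of the proof.
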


\begin{proof}
We prove that if we reflect the point $K$ to the successive sides of the polygon $O_1\ldots O_{n}$ then we get points on a circle $c$ with center $L$. Let $T$ denote the reflection of $K$ to the side $O_1O_n$. From the metric definition of a conic it immediately follows that the locus of the reflected image of a focus to the tangent of the conic is a circle which centre is the other focus. From this we obtained that the perpendicular bisector $O_1O_n$ of the segment $KT$ is tangent to a conic with foci $K$ and $L$. (See e.g. \cite{stachelbook}.) Consider the cyclic quadrangle $P_1Q_1Q_2P_2$ (see Fig. \ref{fig:chainofcirc2(1)}).

\begin{figure}[ht]
\includegraphics[scale=0.8]{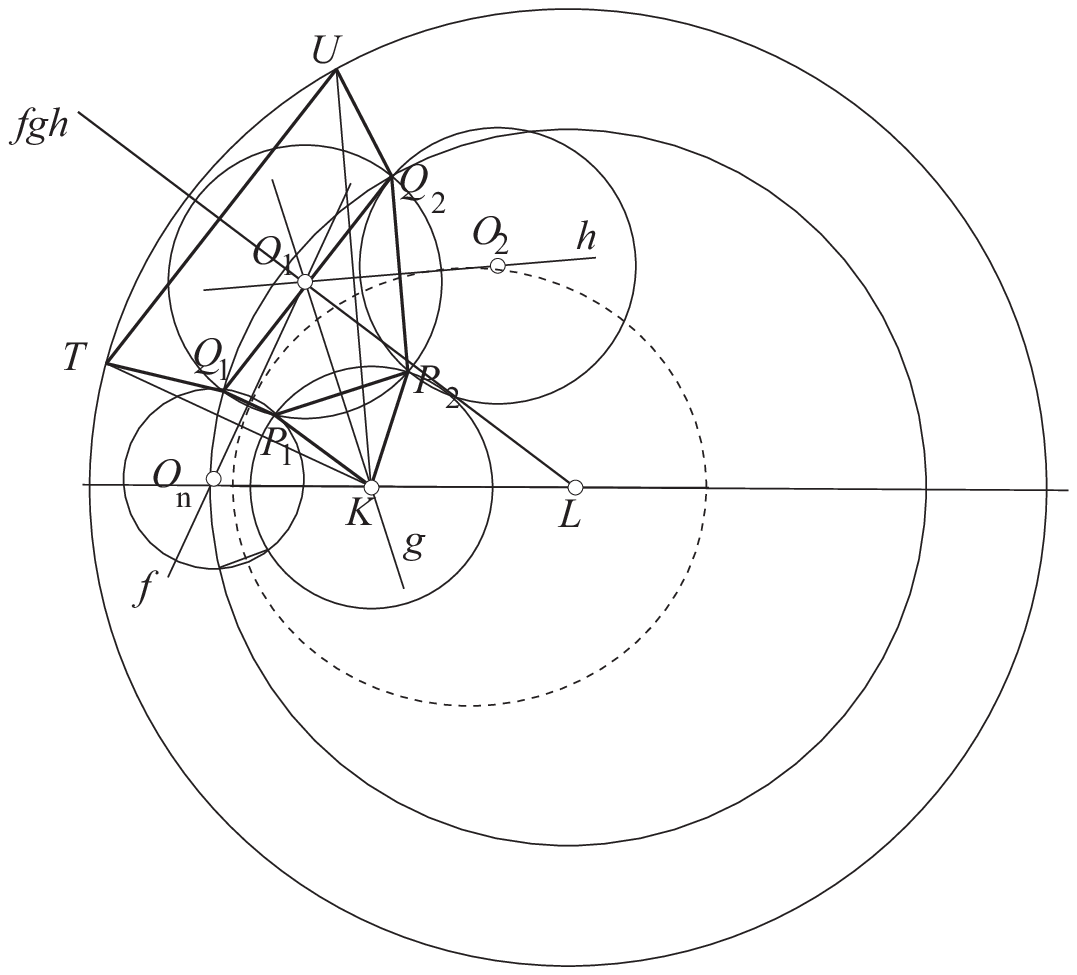}
\caption{The inscribed conic in the case of ellipse.}\label{fig:chainofcirc2(1)}
\end{figure}

Denote by $f$, $g$ and $h$ the reflections in the perpendicular bisectors of $Q_1P_1$, $P_1P_2$ , $P_2Q_2$, respectively. These three lines meet at $O_1$ (or are parallel if $O_1$ is at infinity), so $fgh$ is another reflection. Then $fgh(Q_2)=fg(P_2)=f(P_1)=Q_1$, so $fgh$ is the reflection about the perpendicular bisector of $Q_1Q_2$. But then $fgh(U)=fg(K)=f(K)=T$, so $fgh$ is the reflection about the perpendicular bisector of $TU$, which therefore coincides with the perpendicular bisector of $Q_1Q_2$.
Consequently, the distances of the points $T$ and $U$ from the center $L$ are equal to each other. Hence the conic defined by the foci $K$ and $L$ and the tangent line $O_nO_1$ agree with the conic defined by the same pair of foci and the tangent line $O_1O_2$. The similar reasoning for the next quadrangle $P_2Q_2Q_3P_3$ implies that this conic is the same as the conic defined by the foci $K$, $L$ and the tangent line $O_2O_3$ and so on and so forth. This proves the theorem.
\end{proof}

\begin{corollary}[Theorem 2 in \cite{gevay2}]
If $n=6$ the polygon defined by the centers $O_i$ is a Brianchon hexagon by Brianchon's theorem on conics.
\end{corollary}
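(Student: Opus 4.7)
The plan is to deduce the corollary directly from Theorem \ref{thm:chainofcirc} together with the classical Brianchon theorem, which is already recalled in the introduction. First I would specialize the hypothesis of Theorem \ref{thm:chainofcirc} to the case $n=6$: this yields six centers $O_1,\dots,O_6$ with the property that each of the six lines $O_1O_2, O_2O_3, \dots, O_6O_1$ is tangent to one and the same conic $\mathcal{C}$ with foci $K$ and $L$. In particular, the hexagon $O_1O_2O_3O_4O_5O_6$ is circumscribed about $\mathcal{C}$ in the usual sense (each side lies on a tangent line to $\mathcal{C}$, and consecutive sides touch $\mathcal{C}$ at distinct points, so the hexagon really is a tangential hexagon and not a degenerate configuration).

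Once this is established, I would simply invoke Brianchon's theorem on conics, as stated in the introduction: the three principal diagonals $O_1O_4$, $O_2O_5$, $O_3O_6$ of any hexagon circumscribed about a conic pass through a common point. This is exactly the definition of a Brianchon hexagon given in the introduction, so the corollary follows. No additional computation is needed, and the appeal to Theorem \ref{thm:chainofcirc} already guarantees that the conic $\mathcal{C}$ exists and is the common inscribed conic for all six sides.

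The only subtle point, and the one I would comment on briefly, is a non-degeneracy check: one should verify that $\mathcal{C}$ is genuinely a (non-degenerate) conic with foci $K\ne L$ and that the six tangency points are distinct, so that Brianchon's theorem applies. This is the sole place where the argument could stumble, but in the generic configuration produced by six distinct cyclic quadrangles $P_iQ_iQ_{i+1}P_{i+1}$ on two distinct circles $c(K)$ and $c(L)$, both conditions clearly hold. Hence the hexagon $O_1\dots O_6$ is a Brianchon hexagon, as claimed.
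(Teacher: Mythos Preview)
Your proposal is correct and follows exactly the paper's own approach: the corollary is presented without a separate proof, its justification being precisely that Theorem~\ref{thm:chainofcirc} with $n=6$ gives a hexagon circumscribed to a conic, and Brianchon's theorem then forces the main diagonals to concur. Your additional remark on non-degeneracy is a reasonable caveat that the paper simply leaves implicit.
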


\begin{figure}[ht]
\includegraphics[scale=0.6]{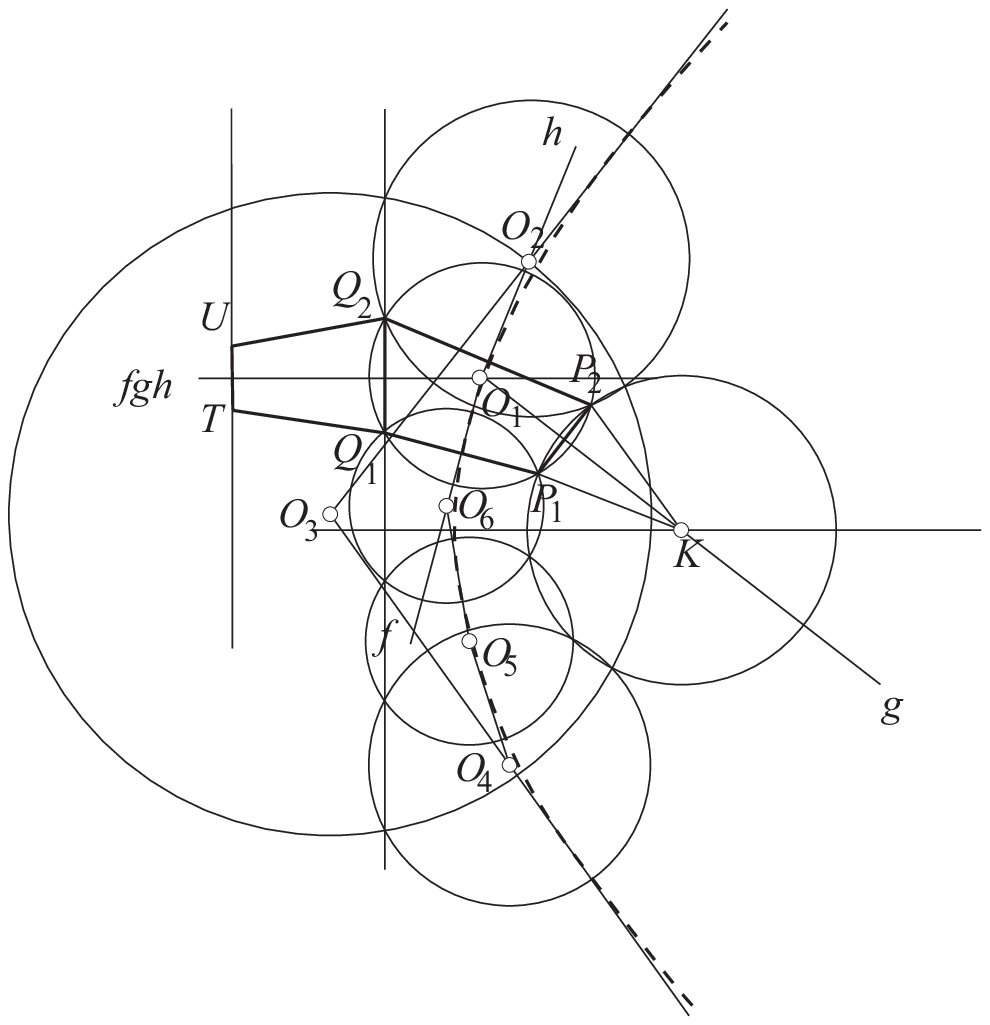}
\caption{The inscribed conic in the case of parabola.}\label{fig:chainofcirc2(2)}
\end{figure}

\begin{remark} Observe that in the case when $K$ is an inner point of the circle $c(L)$ containing the points $Q_i$ the conic is an ellipse and if the point $K$ is an outer point of $c(L)$ then the conic is an hyperbola. The case that the conic is a parabola occurs when the point $L$ is "at infinity" meaning that $c(L)$ is a line. Indeed, in this case the segment $UT$ is parallel to $c(L)$ and the examined lines are tangent to a parabola (see Fig. \ref{fig:chainofcirc2(2)}).
\end{remark}

\end{document}